\documentclass[11pt]{amsart}
\textwidth6.2in \textheight8.5in \oddsidemargin0.00in
\evensidemargin0.00in
\newcommand{\bt}{\begin{Theorem}}
\newcommand{\et}{\end{Theorem}}
\newcommand{\bi}{\begin{itemize}}
\newcommand{\ei}{\end{itemize}}
\newcommand{\bea}{\begin{eqnarray}}
\newcommand{\ba}{\begin{array}}
\newcommand{\eea}{\end{eqnarray}}
\newcommand{\ea}{\end{array}}

\newcommand{\what}{\widehat}
\newcommand{\lgra}{\longrightarrow}

\newtheorem{Definition}{Definition}[section]
\newtheorem{Theorem}[Definition]{Theorem}
\newtheorem{Lemma}[Definition]{Lemma}

\newtheorem{Proposition}[Definition]{Proposition}
\newtheorem{Corollary}[Definition]{Corollary}

\newtheorem{Remark}[Definition]{Remark}

\newcommand{\be}{\begin{equation}}
\newcommand{\ee}{\end{equation}}

\newcommand{\wtilde}{\widetilde}%
\newcommand{\R}{\mathbb R}%
\newcommand{\C}{\mathbb C}%

\sloppy
\begin{document}
\baselineskip16pt
\author[Pratoosh Kumar]{Pratyoosh Kumar}
\address[Pratyoosh Kumar]{State-Math Unit, Indian Statistical Institute 203 B.T.Road, Kolkata. 700108, India,
E-mail: pratyoosh(uderscore)v(at)isical(dot)ac(dot)in}

\title[Weak $L^2$ Eigenfunctions of symmetric spaces]
{Fourier restriction Theorem and characterization of weak $L^2$ eigenfunctions of the Laplace--Beltrami operator}
\subjclass[2000]{Primary 43A85; Secondary 22E30}
\keywords{Weak $L^2$ spaces, eigenfunction, Laplace-Beltrami operator, symmetric
space, Damek-Ricci space} 
\thanks{This work is supported by vising scientist fellowship of Indian Statistical Institute, Kolkata (India)}

\begin{abstract} In this paper we prove the Fourier restriction theorem for $p=2$ on Riemannian symmetric spaces of noncompact type 
with real rank one which extends the 
earlier result proved in \cite[Theorem 1.1]{KRS}. This result depends on the weak $L^2$ estimates of the Poisson transform of $L^2$
function. By using this estimate of the Poisson transform we also characterizes all weak $L^2$ eigenfunction of the Laplace--Beltrami 
operator of Riemannian symmetric spaces of noncompact type with real rank one and eigenvalue $-(\lambda^2+\rho^2)$ 
for $\lambda\in\R\setminus\{0\}$.  
\end{abstract}
\maketitle

\section{Introduction}
Let $X=G/K$ be a rank one symmetric space of non compact type. For $\lambda \in \mathfrak{a^*_{\C}}$ the Poisson transform
 $\mathcal{P}_\lambda F$ of $F\in L^1(K/M),$ is given by 
$$\mathcal{P}_\lambda F(x)=\int_{K/M} e^{(i\lambda+\rho)A(x,b)}F(b)db.$$
For the meaning of symbols we refer the reader to section 2.
It is well known that $\mathcal{P}_\lambda F$ is an eigenfunction of the Laplace--Beltrami operator $\Delta$
with eigenvalue $-(\lambda^2+\rho^2)$ \cite[page 100]{He5}. A celebrated result of Helgason (and Kashiwara et al.
for higher rank) says that if $u$ is an eigenfunction of Laplace--Beltrami operator on $X$ then $u$  is the 
Poisson transform of an analytic functional $T$ defined on $K/M$ \cite[Chapter V, Theorem 6.6]{He5}.

 In this paper we are interested only in such eigenfunctions which are Poisson transforms of $L^1(K/M)$ functions.
 In \cite{F} Frustenberg proved that $u$ is a bounded harmonic function (i.e. $\Delta u=0$) if and only if $u$ is a Poisson integral 
of bounded function on $K/M$ (the Poisson transform corresponding to $\lambda=-i\rho$ is known as the Poisson integral).
The characterization of eigenfunctions which are Poisson transform of an $L^p$ function was studied extensively from then onwards
 \cite{LR,S,SOS,ACD,Str1,Io3,BS}. 
The following $L^p$ analogue  of  Frustenberg's result was proved in \cite{LR,S} for other values of $p$:

{\it Let $1\leq p<2,$ $\lambda=\alpha+i\gamma_{p'}\rho,$ $\alpha \in \R$ and $\Delta u=-(\lambda^2+\rho^2)u$. Then
$u\in L^{p',\infty}(X)$ if and only if $u=\mathcal{P}_\lambda F$ for some $F\in L^{p'}(K/M)$. Moreover,
\be\|\mathcal{P}_{\alpha+i\gamma_{p'}\rho} F\|_{p',\infty}\leq \|F\|_{p'}.\label{poissonp}\ee
If $p=2$ then $u=\mathcal{P}_{0} F$ with $F\in L^2(K/M),$ if and only if  $\Delta u=-\rho^2u$ and the function 
$h(ka_t.o)=(1+t)^{-1}u(ka_t.o)$ is in $L^{2,\infty}(X)$.}

In the above $\gamma_p=(\frac2p-1),\; 1\leq p\leq \infty,$ and hence $\gamma_{p'}=-\gamma_p$ where 
$\frac1 p+\frac1 {p'}=1$.
The estimate (\ref{poissonp}) played a fundamental role in the development of harmonic analysis on Riemannian symmetric spaces.
 By using the estimate (\ref{poissonp}) for $1<p<2$, 
 Lohou{\'e} and Rychener \cite{LR} obtained the following continuous inclusion 
$$L^{p,1}(G)\ast L^{p,1}(G/K)\subseteq L^{p,1}(G).$$
Cowling et al. in \cite{Co2} proved an elegant generalization of this result. They proved a convolution relation of the following
 form \be L^{p,r}(G)\ast L^{p,s}(G)\subseteq L^{p,t}(G),\label{cowling}\ee
where $1<p<2$, $\frac1r+\frac1s\geq\frac1t$ and $r,s,t \in [1,\infty]$. 
Finally in \cite{Io1} Ionescu obtained the end point version of (\ref{cowling})
\be L^{2,1}(G)\ast L^{2,1}(G)\subseteq L^{2,\infty}(G).\label{endpoint}\ee 

Coming back to estimate \ref{poissonp}, we note that this generalizes the well known behavior of the elementary spherical  
function $\phi_\lambda$ in terms of certain natural size estimates of the Poisson transform. For the reader's 
benefit we will explain this point in detail.
For $\lambda\in\C$ the elementary spherical function $\phi_\lambda$ is given by 
$$\phi_\lambda (x)=\mathcal{P}_\lambda 1(x)=\int_{K/M} e^{(i\lambda+\rho)A(x,b)}db.$$ 
If $1\leq p<2$ and $\alpha\in\R$ then the well known estimate $|\phi_{\alpha+i\gamma_{p'}\rho}(a_t)|\asymp e^{-2\rho t/p'}\;(t\geq0)$ 
(\cite[Lemma 3.2]{RS}) implies that $\phi_{\alpha+i\gamma_{p'}\rho}$ belongs to $L^{p',\infty}(G)$ (the notation $U\asymp W$ for two
 positive functions mean that there exist positive constants $c_1,\;c_2$ 
such that $c_1U(x)\leq W(x)\leq c_2 U(x)$ for appropriate values of $x$). The estimate (\ref{poissonp}) can be 
thought of as a generalization of this fact. However, the case $p=2$ is  little different. 
It is known from the work of Harish-Chandra and a subsequent refinement by Anker that
 $\phi_0(a_t)\asymp (1+t)^{-1}e^{-\rho t}\;(t\geq0)$
(\cite[page 656]{ADY}). Hence $\phi_0\notin L^{2,\infty}(G)$. But it is not hard to show that $\phi_0\in L^q(G)$ for all $q>2$. 
This was generalized in \cite{Co1} in the form of the following estimate: for all $q\in (2,\infty]$ there exists a constant 
$C_q>0$ such that for all $F\in L^2(K/M)$ one has the 
inequality
$\|P_0F\|_{L^q(X)}\leq C_q\|F\|_{L^2(K/M)}$.
We now focus on the case $\lambda\in\R\setminus \{0\}$.
It follows from the Harish-Chandra series for elementary
spherical functions that for $\lambda \in \R\setminus\{0\}$ the function $\phi_{\lambda}$ satisfies the stronger estimate 
$|\phi_\lambda(a_t)|\leq C_\lambda e^{-\rho t}$ for all $t\geq0$ (\cite[(3.11)]{Io2}). Hence $\phi_{\lambda}\in L^{2,\infty}(G)$ for
 $\lambda\in\R\setminus \{0\}$. 
We will show that this also holds true for Poisson transforms $P_{\lambda}F$ for $F\in L^2(K/M)$ and $\lambda \in \R\setminus\{0\}$.
As all radial eigenfunction of $\Delta$ with $-(\lambda^2+\rho^2)$ are constant multiple of 
$\phi_\lambda$ it follows that for a given $\lambda \in \R\setminus\{0\}$ all radial eigenfunctions of $\Delta$ with eigenvalue 
$-(\lambda^2+\rho^2)$  belongs to $L^{2,\infty}(X)$. One of our aim in this paper is to characterize all weak $L^2$ eigenfunctions of 
$\Delta$ with eigenvalue $-(\lambda^2+\rho^2)$ for a given $\lambda \in \R\setminus\{0\}$.  

The main result we prove in this paper is the following:

\begin{Theorem}\label{main}
If $\lambda \in \R\setminus{\{0\}}$ then there exists a constant $C_\lambda>0$ such that  
\be\|\mathcal{P}_\lambda F\|_{2,\infty}\leq C_\lambda \|F\|_{L^2(K/M)},\;\;\;\ \text{for all}\;\ F\in L^2(K/M).\label{estimate}\ee
If $\Delta u=-(\lambda^2+\rho^2)u$ then $u\in L^{2,\infty}(X)$ if and only if 
$u=\mathcal{P}_\lambda F$ for some $F\in L^2(K/M)$. 
\end{Theorem}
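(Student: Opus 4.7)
The plan is to prove the norm estimate (\ref{estimate}) first and deduce the characterization from it. The estimate is the substantive analytic input; the characterization then follows by combining it with Helgason's representation of eigenfunctions as Poisson transforms of analytic functionals, together with a boundary-asymptotics argument at infinity.

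\emph{Proof of (\ref{estimate}).} I would work in $KAK$ (polar) coordinates $x=ka_t\cdot o$, where the $G$-invariant measure on $X$ is comparable to $e^{2\rho t}\,dk\,dt$ for $t\geq 1$. The estimate reduces to the pointwise bound
$$
|\mathcal{P}_\lambda F(ka_t\cdot o)|\;\leq\; C_\lambda\, e^{-\rho t}\, H(k), \qquad t\geq 1,
$$
with $\|H\|_{L^2(K)}\leq C\|F\|_{L^2(K/M)}$, together with the easy global bound $\|\mathcal{P}_\lambda F\|_\infty\leq\|F\|_{L^2(K/M)}$ (Cauchy--Schwarz plus $\phi_{-i\rho}\equiv 1$) that handles $t\in[0,1]$. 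Granted this, the super-level set $\{|\mathcal{P}_\lambda F|>s\}$ lies inside $\{(k,t):H(k)>(s/C_\lambda)e^{\rho t}\}$, and integrating the volume element $e^{2\rho t}\,dt\,dk$ bounds its measure by $C\|F\|_2^2/s^2$, as required.

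To produce the pointwise bound I would decompose $F$ into $K$-types, $F=\sum_\delta F_\delta$, and expand each $\mathcal{P}_\lambda F_\delta(ka_t\cdot o)$ via the Harish-Chandra series for the generalized spherical functions $\Phi_\lambda^\delta$. For $\lambda\in\R\setminus\{0\}$ this gives $\Phi_\lambda^\delta(a_t) = c_\delta^+(\lambda)e^{(i\lambda-\rho)t}(1+O(e^{-2t})) + c_\delta^-(\lambda)e^{(-i\lambda-\rho)t}(1+O(e^{-2t}))$, uniformly in $\delta$ after the standard $\delta$-bounds are incorporated. Recombining yields
$$
\mathcal{P}_\lambda F(ka_t\cdot o) \;=\; e^{-\rho t}\bigl(e^{i\lambda t}T_+(k)+e^{-i\lambda t}T_-(k)\bigr) + O\bigl(e^{-(\rho+2)t}\|F\|_2\bigr),
$$
with $T_\pm$ arising from the normalized (hence $L^2$-bounded) Knapp--Stein intertwining operators $c(\pm\lambda)^{-1}\mathcal{A}_{\pm\lambda}$ applied to $F$. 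Setting $H(k)=|T_+(k)|+|T_-(k)|$ gives $\|H\|_{L^2(K)}\leq C\|F\|_{L^2(K/M)}$.

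\emph{Characterization.} The implication $(\Leftarrow)$ is (\ref{estimate}). Conversely, if $\Delta u=-(\lambda^2+\rho^2)u$ and $u\in L^{2,\infty}(X)$, Helgason's theorem gives $u=\mathcal{P}_\lambda T$ for some analytic functional $T$ on $K/M$, and it remains to upgrade $T$ to an element of $L^2(K/M)$. Applying the asymptotic expansion above to $u$ itself identifies limiting boundary data
$$
T_\pm(k)\;=\;\lim_{R\to\infty}\frac{1}{R}\int_0^R e^{\mp i\lambda t}\,e^{\rho t}\,u(ka_t\cdot o)\,dt,
$$
expressible in terms of $T$ via the bounded intertwining operators above. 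The weak-$L^2$ hypothesis on $u$, combined with Lorentz-space duality for the parametrisation $(k,t)\mapsto ka_t\cdot o$, forces $T_\pm\in L^2(K)$; since $c(\pm\lambda)\neq 0$ for real $\lambda\neq 0$, the intertwining relation is invertible and we recover $T\in L^2(K/M)$.

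The main obstacle I foresee is the extraction of $L^2$ boundary data from only weak-$L^2$ radial control on $u$. The two Harish-Chandra summands share the amplitude $e^{-\rho t}$ and must be separated through their oscillation $e^{\pm i\lambda t}$, while $u$ is controlled only in a Lorentz quasi-norm; the oscillatory averaging in $t$ over intervals of length $\pi/|\lambda|$ therefore demands careful quantitative bookkeeping, and this is the delicate crux of the argument. Once the averaging estimate is in hand, the remaining pieces are essentially routine manipulations with the Harish-Chandra series and the Knapp--Stein normalization.
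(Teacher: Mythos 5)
There is a genuine gap, and it sits exactly where the paper's real work is done. Your superlevel-set scheme is sound in outline (it is the same counting argument the paper runs, only in $\bar NA$ rather than $KAK$ coordinates), but it requires the pointwise domination $|\mathcal{P}_\lambda F(ka_t\cdot o)|\le e^{-\rho t}H(k)$ \emph{simultaneously for all} $t\ge 1$ with $H\in L^2(K)$, i.e.\ an $L^2$ bound on the maximal function $\sup_{t\ge1}e^{\rho t}|\mathcal{P}_\lambda F(ka_t\cdot o)|$. Your route to this is the Harish--Chandra expansion of the generalized spherical functions $\Phi_\lambda^\delta$ summed over $K$-types, with a claimed error $O(e^{-(\rho+2)t}\|F\|_2)$ ``uniformly in $\delta$.'' That uniformity is not standard and is the crux you cannot wave through: the constants in the expansion of $\Phi_\lambda^\delta$ grow with $\delta$, so for general $F\in L^2(K/M)$ the termwise errors do not sum to a bound by $\|F\|_2$, and no $L^\infty$-in-$k$ error can be expected when the main terms $T_\pm$ are themselves only in $L^2(K)$. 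What actually makes the maximal estimate work is a singular-integral mechanism, not an asymptotic expansion. The paper writes $\mathcal{P}_\lambda F(\bar na_t\cdot o)=e^{-(i\lambda+\rho)t}\int_{\bar N}[(e^{-4t}+2ce^{-2t}|V|^2)+|\bar m|^4]^{-(i\lambda+\rho)/2}\psi(\bar n\bar m)\,d\bar m$ with $\|\psi\|_{L^2(\bar N)}=\|F\|_{L^2(K/M)}$, compares the kernel with the homogeneous oscillating kernel $|\bar m|^{-(Q+2i\lambda)}$, and dominates the integral by $M_0\psi(\bar n)+T_*\psi(\bar n)$, where $T_*$ is the maximal truncated convolution operator with that kernel; the $L^2(\bar N)$-boundedness of $T_*$ (Knapp--Stein, Folland--Stein, Stein's Cotlar-type argument) is the substantive input, and it is precisely what replaces your unproved uniform error bound. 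If you want to keep the $K$-type/Harish--Chandra language, you must still prove an equivalent maximal inequality, and nothing in your sketch does so.

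The converse direction has the same character: you correctly identify that extracting $L^2$ boundary data $T_\pm$ from only weak-$L^2$ control of $u$ is ``the delicate crux,'' but you then leave it as careful bookkeeping to be done. That step is a theorem (Ionescu, \emph{On the Poisson transform on symmetric spaces of real rank one}): if $\Delta u=-(\lambda^2+\rho^2)u$ with $\lambda\in\R\setminus\{0\}$ and $M(u)=\bigl(\limsup_{R\to\infty}\frac1R\int_{B(o,R)}|u|^2\bigr)^{1/2}<\infty$, then $u=\mathcal{P}_\lambda F$ with $F\in L^2(K/M)$. The paper does not reprove it; it only supplies the easy bridge $u\in L^{2,\infty}(X)\Rightarrow M(u)\le C\|u\|_{2,\infty}$ via a rearrangement estimate and the a priori growth bound $|u(ka_t\cdot o)|\le C\|u\|_{2,\infty}e^{\alpha t}$. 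I would advise you either to cite Ionescu's theorem at this point, or to accept that your oscillatory-averaging construction of $T_\pm$ is itself a substantial argument that must be written out, not asserted.
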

It is important to realize that Poisson transforms are really certain matrix coefficient of the class one principal series
representation. For $\lambda\in \mathfrak a^*_{\C}$ the class one principal series representation $\pi_\lambda$ of a semisimple Lie 
group $G$ are realized on $L^2(K/M)$  and are given by 
$$\pi_\lambda(g)f(kM)=e^{-(i\lambda+\rho)H(g^{-1}k)}F(k(g^{-1}k)).$$
It is known that for $\lambda \in \mathfrak a^*$ the representation $\pi_\lambda$ is unitary and irreducible. It is also known that
 $\pi_\lambda$ and $\pi_{-\lambda}$ are unitarily equivalent. Using the description of $\pi_\lambda$ given above  
 the Poisson transform of $F\in L^2(K/M)$ can also be written as
$$\mathcal{P}_\lambda F(g\cdot o)=\left\langle \pi_\lambda(g)1,\bar F \right\rangle.$$
Theorem \ref{main} thus shows that $|\left\langle \pi_\lambda(\cdot)1, F \right\rangle|$ belongs 
to $L^{2,\infty}(G/K)$ for $\lambda \in \mathfrak a^*\setminus\{0\}$ and $F\in L^2(K/M)$. It is obvious from the estimates of $\phi_0$ mentioned earlier
that such a result is not expected if $\lambda=0$.

\noindent {\bf Acknowledgement:} I would like to thank Swagato K. Ray and Rudra P. Sarkar for suggesting this problem to me.
I would also like to thank them for several discussion on the subject. 

\section{Notation and Preliminaries}
In this section we summarize some standard results on noncompact semisimple Lie group and associated 
symmetric space which will be required. Most of our notation are standard and can be found in \cite{GV,H}. Let $G$ be a connected 
noncompact semisimple Lie group with finite center, and $\mathfrak{g}$ be the Lie algebra of $G$. 
Let $\theta$ be a Cartan involution of $\mathfrak{g}$ and $\mathfrak{g}=\mathfrak{k}\oplus\mathfrak{p}$ be the 
associated Cartan decomposition. Let $K=\text{exp}\;\mathfrak{k}$ be a maximal compact subgroup of $G$ and let $X=G/K$ 
be the associated Riemannian symmetric space. If $o=eK$ denotes the identity coset then for $g\in G$ the quantity $r(g)$ denotes 
the Riemannian distance of the coset $g.o$ from the identity coset. Let $\mathfrak{a}$ be a maximal abelian subspace of 
$\mathfrak{p}$, $A=\text{exp}\;\mathfrak{a}$ be the corresponding subgroup of $G,$ and $M$ the centralizer of $A$ in $K$. 

Now onward we will assume that the group $G$
has real rank one that is dim $\mathfrak{a}=1$. In this case it is well known that the set of nonzero roots is either of the 
form $\{-\alpha,\alpha\}$ or $\{-\alpha,-2\alpha,\alpha,2\alpha\}$. Let $\mathfrak{g}_\alpha$ and $\mathfrak{g}_{2\alpha}$
be the root spaces corresponding to the roots $\alpha$ and $2\alpha$ respectively. Let $\mathfrak{n}= 
\mathfrak{g}_\alpha\oplus\mathfrak{g}_{2\alpha}$ and $N=$exp $\mathfrak{n}$. Let $H_0$ be the unique element of $\mathfrak{a}$ 
such that $\alpha(H_0)=1$ and $A=\{a_s : a_s=\text{exp}\;sH_0, s\in\R\}$. We identify 
$\mathfrak{a^*}$ (the dual of $\mathfrak{a}$) and $\mathfrak{a^*_{\C}}$ (the complex dual of $\mathfrak{a}$) by $\R$ and $\C$
via the identification $t\mapsto t\alpha$ and $z\mapsto z\alpha$, $t\in \R$ and $z\in \C$ respectively. Let 
$m_1=\text{dim}\;\mathfrak{g}_\alpha$, $m_2=\text{dim}\;\mathfrak{g}_{2\alpha}$ and $\rho=\frac{1}{2}(m_1+2m_2)\alpha$ be the
half sum of positive roots. By abuse of notation we will denote $\rho(H_0)= \frac{1}{2}(m_1+2m_2)$ by $\rho$.
Let $G=KAN$ be the Iwasawa decomposition of $G$ that is , any $g\in G$ can be uniquely written as 
\be g=k(g)\;\text{exp}H(g)\;n(g)\label{iwasawa}\ee
where $k(g)\in K$, $H(g)\in\mathfrak{a}$ and $n(g)\in N$.
For $F\in L^1(K/M)$ and $\lambda\in \C\;(=\mathfrak{a}^*_{\C})$ the Poisson transform $P_{\lambda}F$ is a function on $X$ defined by 
the formula \be \mathcal{P}_\lambda F(x)=\int_{K/M} e^{(i\lambda+\rho)A(x,b)}F(b)db\label{poissonX},\ee
where $A(gK,kM)=-H(g^{-1}k).$

Let $\bar{N}=\text{exp}(\mathfrak{g}_{-\alpha}\oplus\mathfrak{g}_{-2\alpha})$ and $G=\bar NAK$ be the corresponding Iwasawa 
decomposition of $G$. Let $dk,d\bar n$ and $dm$ be the normalized Haar measure on $K,\bar N$ and $M$ such that 
\bea \int_{K}1\; dk=1;\;\;\ \int_M 1\; dm=1; \;\;\ \int_{\bar N} e^{-2\rho(H(\bar n))}d\bar n=1
\nonumber &&\\ \int_G f(g)\;dg=c\int_{\bar N}\int_{\R}\int_K f(\bar na_tk)e^{2\rho t}\;d\bar n\; dt\; dk.\eea
We will also need the following change of variable formula relating integrals on $K/M$ and integrals on $\bar N$ \cite[Chapter I, Theorem 5.20]{H}
\be\int_{K/M}F(b)\;db = \int_{\bar N} F(k(\bar n)M)e^{-2\rho(H(\bar n))}\;d\bar n.\label{ktonbar}\ee
We also have the cartan decomposition $G=K\overline{A^+}K$ where $\overline {A^+}=\{a_t\in A  : t\geq 0\}$. The functions defined on
 $X$ can also be viewed as right $K$-invariant function on $G$. The $K$-biinvariant functions of $G$ are called radial function. 
The Haar measure related to the Cartan decomposition is given by 
\be \int_G f(g)\; dg =C \int_K\int_0^\infty \int_K f(k_1a_tk_2)(\sinh t)^{m_1}(\sinh 2t)^{m_2}\;dk\;dt\;dk.\label{cartan}\ee 
 The nilpotent subgroup $\bar N$ can be identified with $\R^{m_1}\times\R^{m_2}$ via a
 natural map $\bar n=\exp (V+Z)\rightarrow (V,Z),$
where $m_1=\text{dim}\;\mathfrak{g}_{-\alpha}$, $m_2=\text{dim}\;\mathfrak{g}_{-2\alpha}$, $V\in \R^{m_1}$ and $Z\in \R^{m_2}$.
For any $t\in \R$ the dilation $\delta_t$ on subgroup $\bar N$ is an automorphism given by $\delta_t(\bar n)=a_{-t}\bar na_t$.
Writing $\bar n=(V,Z)$ it can be written more explicitly as: $\delta_t(V,Z)=(e^tV,e^{2t}Z).$ We 
define the function $|\bar n|$ on $\bar N$ by 
\be |\bar n|=|(V,Z)|=(c^2|V|^4+4c|Z|^2)^{1/4},\label{norm}\ee
where $c=\frac {1}{4(m_1+m_2)}$. This function
has the property that $|\delta_t(\bar n)|=e^t|\bar n|$ 
for any $s\in\R$ and $\bar n\in \bar N$. For $\lambda \in \R\setminus\{0\}$ we define the kernel $K_\lambda$ by the formula
\be K_\lambda(\bar n)=|\bar n|^{-(Q+i2\lambda)},\;\; \text{for}\;\ |\bar n| \neq 0 \ee
where $Q=2\rho$ is the homogeneous dimension of $\bar N$. $L^2$ boundedness of the convolution operator defined by kernel $K_\lambda$
have been studied in \cite[Section I]{KS} (see also \cite[Theorem 6.19]{FS} ). We define the truncated kernels $K_{\lambda,\eta}$ by
$$K_{\lambda,\eta}(\bar n)=  K_\lambda(\bar n)\chi_{\{\bar n\in\bar N\mid |\bar n|\geq\eta\}}(\bar n),\;\;\;\;\bar n\in\bar N,$$
and the corresponding convolution operator $T_\eta$ by 
$$T_\eta \psi(\bar n)=\int_{\bar N}\psi(\bar n_1)K_{\lambda,\eta}(\bar n_1^{-1}\bar n)d\bar
 n_1,\;\;\;\; \psi\in C_c^\infty(N), \bar n\in\bar N.$$
We will need to consider the following maximal operator associated to the truncated kernel $K_{\lambda,\eta}$ defined by
$$T_*\psi(\bar n)=\sup_{\eta>0}|T_\eta \psi(\bar n)|=\sup_{\eta>0}\left|\int_{|\bar n_1|\geq \eta}\psi(\bar n\bar n_1)
\frac{1}{|\bar n_1|^{Q+i2\lambda}}\;d\bar n_1\right|.$$ 
By using the argument given in \cite[page 33-page 36]{Stn} (see also \cite[page 627]{Stn})
we have the following result regarding the operator $T_*$
\begin{Theorem}\label{important}
If $\lambda\in\R\setminus \{0\}$ then there exists a constant $C_{\lambda}>0$ such that
\be\|T_*\psi\|_{L^2(\bar N)}\leq C_\lambda \|\psi\|_{L^2(\bar N)},\ee
for all $\psi\in L^2(\bar N)$.
\end{Theorem}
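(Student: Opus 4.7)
The plan is to follow the classical two-step strategy for maximal singular integrals: first establish $L^2$ boundedness of the underlying (non-maximal) convolution operator, then upgrade to the maximal version via a Cotlar-type pointwise inequality. The homogeneous structure on $\bar N$ (with norm $|\cdot|$ and dilations $\delta_t$, so $Q = 2\rho$ plays the role of the homogeneous dimension) provides a space-of-homogeneous-type setting in which the standard Calderón--Zygmund machinery from \cite{Stn} applies almost verbatim.

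First I would record $L^2$ boundedness of the principal value operator
$$T\psi(\bar n) = \mathrm{p.v.}\int_{\bar N}\psi(\bar n \bar n_1)\,|\bar n_1|^{-(Q+i2\lambda)}\,d\bar n_1.$$
This is exactly the result proved in \cite{KS} and stated in \cite[Theorem 6.19]{FS}: the kernel $K_\lambda$ is homogeneous of degree $-Q$, and the oscillatory factor $|\bar n_1|^{-i2\lambda}$ with $\lambda \neq 0$ provides the cancellation on the unit sphere of $\bar N$ needed to define $T$ in principal value and to obtain $\|T\psi\|_{L^2(\bar N)} \leq C_\lambda \|\psi\|_{L^2(\bar N)}$ (for $\lambda=0$ this cancellation fails, which is why the hypothesis $\lambda\in\R\setminus\{0\}$ is essential).

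Next I would verify that $K_\lambda$ is a Calderón--Zygmund kernel on $(\bar N, |\cdot|)$; the smoothness/Hörmander condition follows from the homogeneity of $|\cdot|$ together with the smoothness of $\bar n \mapsto |\bar n|$ away from the identity, with constants depending on $\lambda$. With this in hand, one applies the Cotlar-type pointwise estimate of \cite[pp.~33--36, 627]{Stn}, adapted to the nilpotent group:
$$T_*\psi(\bar n) \leq C\bigl[M(T\psi)(\bar n) + M\psi(\bar n)\bigr],$$
where $M$ is the Hardy--Littlewood maximal operator on $\bar N$ defined with respect to the homogeneous balls $\{\bar n : |\bar n|\leq r\}$. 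The derivation of this inequality is the usual one: for fixed $\bar n$ and $\eta>0$, split $\psi = \psi \chi_{B(\bar n,\eta)} + \psi \chi_{B(\bar n,\eta)^c}$, compare $T_\eta\psi(\bar n)$ with the average of $T\psi$ over $B(\bar n,\eta)$, and bound the resulting error using the regularity of the kernel and the $L^2$ boundedness of $T$.

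Finally, since $M$ is bounded on $L^2(\bar N)$ by the Vitali covering argument on this space of homogeneous type, and $T$ is bounded on $L^2(\bar N)$ from Step~1, the Cotlar inequality immediately yields
$$\|T_*\psi\|_{L^2(\bar N)} \leq C\bigl(\|M(T\psi)\|_{L^2} + \|M\psi\|_{L^2}\bigr) \leq C_\lambda \|\psi\|_{L^2(\bar N)}.$$
The main technical obstacle is the Cotlar inequality step: one needs to be careful that the smoothness estimates for $K_\lambda$ have the right dependence on $\lambda$, and that the group structure (in particular non-commutativity of $\bar N$ and the fact that $|\bar n_1^{-1}\bar n|$ is only a quasi-norm) does not disrupt the truncation-and-compare argument. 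These issues are handled exactly as in Stein's treatment on homogeneous groups, so no essentially new ingredient is needed beyond what is cited.
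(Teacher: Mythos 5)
Your proposal follows exactly the route the paper itself indicates: it cites \cite[Section I]{KS} (and \cite[Theorem 6.19]{FS}) for the $L^2$ boundedness of the untruncated convolution operator and then invokes the Cotlar-type truncation-and-compare argument of \cite[pp.~33--36, 627]{Stn} to pass to the maximal operator $T_*$, which is precisely your two-step plan. The only small imprecision is that the cancellation making the principal value converge is radial rather than spherical --- $K_\lambda$ is constant on the unit sphere of $\bar N$, and it is the boundedness of $\int_{a<|\bar n|<b}|\bar n|^{-Q-2i\lambda}\,d\bar n \asymp \lambda^{-1}(b^{-2i\lambda}-a^{-2i\lambda})$ for $\lambda\neq 0$ that does the work --- but this does not affect the correctness of the argument.
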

In the following we collect some basic facts about Lorentz spaces which will be used in this paper (see \cite{G, SW} for details). 
Let $(M, m)$ be a $\sigma$-finite measure space, $f:M\lgra \C$ be a measurable function. 
The distribution function $d_f: (0, \infty)\lgra (0,\infty]$ and nonincreasing rearrangement
$f^*: (0, \infty)\lgra (0,\infty]$ of $f$ are defined by formulae
$$ d_f(s)= m(\{x \in M): |f(x)|> s\})\;\;\;\ \text{and}\;\;\ f^*(t)=\inf\{s\mid
d_f(s)\le t\}.$$

For $p\in [1, \infty)$, $q\in [1, \infty]$ we
define,
\begin{equation}\|f\|^*_{p,q}=\begin{cases}\left(\frac qp\int_0^\infty [f^*(t)t^{1/p}]^q\frac{dt}t\right)^{1/q}
\ \textup{ when } q<\infty\\ \\ \sup_{t>0}t^{1/p}f^*(t)=\sup_{t>0}td_f(t)^{1/p}\ \ \ \ \ \ \ \ \textup{ when } q=\infty.
\end{cases}\label{lorentznorm}\end{equation} 
 
For $p\in [1, \infty)$, $q\in [1, \infty]$ we define the {\em Lorentz space} $L^{p,q}(M)$  as follows:
$$L^{p,q}(M) = \{f:M\lgra \C : f \quad \text{measurable and}\quad \|f\|^*_{p,q}<\infty \}.$$
 
By $L^{\infty, \infty}(M)$ and $\|\cdot\|_{\infty, \infty}$ we mean respectively
the space $L^\infty(M)$ and the norm $\|\cdot\|_\infty$ we also have
$L^{p,p}(M)=L^p(M)$. For $1<p<\infty$ the space $L^{p,\infty}$ is known as weak $L^p$ space and also 
$L^{p,q}\subset L^{p,s}$ for all $1\leq q\leq s\leq \infty$. For $1<p<\infty$ and
$1\le q<\infty$, the dual (the space of all continuous linear
functional) of $L^{p,q}(M)$ is $L^{p', q'}(M)$. Everywhere in this paper
any $p\in [1, \infty)$ is related to $p'$ by the relation $\frac 1p+\frac 1{p'}=1.$

We will follow the standard practice of using the letter C for constant, whose value may
change from one line to another. Occasionally the constant C will be suffixed to show its dependency
on related parameter. The letters $\C$ and $\R$ will denote the set of complex and real
numbers respectively.

\section{Proof of Theorem \ref{main}}
The important part of the proof of Theorem \ref{main} is the following norm estimate.
Our proof of the following Proposition uses an argument similar to one given in \cite{So}.
\begin{Proposition}
If $\lambda\in \R \setminus{\{0\}}$ then there exists a constant $C_{\lambda}>0$ such that for all $F\in L^2(K/M)$,
$$\|\mathcal{P}_\lambda F\|_{L^{2,\infty}(X)}\leq C_\lambda \|F\|_{L^2(K/M)}.$$
\label{fundamental}
\end{Proposition}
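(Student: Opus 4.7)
The plan is to transfer the problem from $X=G/K$ to the nilpotent group $\bar N$ and reduce the weak $L^2$ estimate to the $L^2$ boundedness of $T_*$ established in Theorem~\ref{important}. I write a generic point of $X$ in Iwasawa form $x=\bar n_0 a_t\cdot o$ with $(\bar n_0,t)\in\bar N\times\R$ and measure $dx=e^{2\rho t}\,d\bar n_0\,dt$. Combining (\ref{poissonX}), (\ref{ktonbar}) and the standard cocycle identity $H(g^{-1}k(\bar n))=H(g^{-1}\bar n)-H(\bar n)$, a short computation produces
\[
|\mathcal{P}_\lambda F(\bar n_0 a_t\cdot o)| = e^{-\rho t}\left|\int_{\bar N}\Phi(\bar n_0\,\delta_{-t}\bar m)\,e^{-(i\lambda+\rho)H(\bar m)}\,d\bar m\right|,
\]
where $\Phi(\bar n):=F(k(\bar n)M)\,e^{(i\lambda-\rho)H(\bar n)}$; by (\ref{ktonbar}) one has $\|\Phi\|_{L^2(\bar N)}=\|F\|_{L^2(K/M)}$.

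The heart of the proof is to dominate this integral, uniformly in $t$, by a $t$-\emph{independent} function $h(\bar n_0)\in L^2(\bar N)$. In the rank one setting the explicit formula for $e^{-H(\bar m)}$ in the $(V,Z)$-coordinates of $\bar N$ yields the asymptotic $e^{-H(\bar m)}=|\bar m|^{-2}(1+O(|\bar m|^{-2}))$ as $|\bar m|\to\infty$, which gives the decomposition
\[
e^{-(i\lambda+\rho)H(\bar m)} = \chi_{\{|\bar m|\geq 1\}}(\bar m)\,|\bar m|^{-(Q+i2\lambda)} + E(\bar m), \qquad |E(\bar m)|\leq C\min(1,|\bar m|^{-(Q+2)}).
\]
Substituting this decomposition and performing the rescaling $\bar m'=\delta_{-t}\bar m$ (whose Jacobian $e^{Qt}$ interacts with the homogeneity $|\delta_t\bar m'|=e^t|\bar m'|$ of the norm) converts the main-term integral into $e^{-i2\lambda t}\,T_{e^{-t}}\Phi(\bar n_0)$, whose modulus is dominated by $T_*\Phi(\bar n_0)$ uniformly in $t$. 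For the error integral the same rescaling, combined with a dyadic annular decomposition paired against the majorant $\min(1,|\bar m|^{-(Q+2)})$, produces a bound by the Hardy--Littlewood maximal function $\mathcal{M}\Phi(\bar n_0)$; the exponents of $t$ generated by the Jacobian cancel exactly against those from the scaling of the majorant, so no growing factors of $t$ survive. Setting $h:=T_*\Phi+C\,\mathcal{M}\Phi$, Theorem~\ref{important} together with the standard $L^2$ boundedness of $\mathcal{M}$ on $\bar N$ yields $\|h\|_{L^2(\bar N)}\leq C_\lambda\|F\|_{L^2(K/M)}$.

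Finally I convert the separable pointwise inequality $|\mathcal{P}_\lambda F(\bar n_0 a_t\cdot o)|\leq e^{-\rho t}h(\bar n_0)$ into the desired $L^{2,\infty}(X)$ bound by a direct computation: for each $\bar n_0$ with $h(\bar n_0)>0$, the $t$-slice $\{t\in\R:e^{-\rho t}h(\bar n_0)>s\}=\{t<\rho^{-1}\log(h(\bar n_0)/s)\}$ has $e^{2\rho t}\,dt$-measure equal to $(2\rho)^{-1}h(\bar n_0)^2/s^2$, so that integrating in $\bar n_0$ gives
\[
|\{x\in X:|\mathcal{P}_\lambda F(x)|>s\}|\;\leq\;\frac{\|h\|_{L^2(\bar N)}^{2}}{2\rho\,s^{2}}\;\leq\;\frac{C_\lambda^{2}\,\|F\|_{L^2(K/M)}^{2}}{s^{2}}.
\]

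The main obstacle is the middle step: extracting a uniform-in-$t$ pointwise majorant despite the $t$-dependent argument $\bar n_0\,\delta_{-t}\bar m$ of $\Phi$. The key idea is to let the truncation parameter of the convolution kernel move with $t$, specifically taking $\eta=e^{-t}$; this is precisely why the \emph{maximal} truncated operator $T_*$, rather than any single $T_\eta$, is the natural object to invoke. Once this alignment between the rescaling on $\bar N$ and the kernel truncation is recognized, the remaining verification that the error term satisfies an integrable-majorant estimate with no residual $t$-growth is a routine dyadic computation.
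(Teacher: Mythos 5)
Your proposal is correct and follows essentially the same route as the paper: reduction to $\bar N\times\R$ via the Iwasawa decomposition, comparison of the Poisson kernel with the homogeneous kernel $|\bar m|^{-(Q+2i\lambda)}$ truncated at the $t$-dependent scale $e^{-t}$ (hence the maximal operator $T_*$ of Theorem \ref{important}), a Hardy--Littlewood maximal bound for the error, and the separable pointwise majorant $e^{-\rho t}h(\bar n)$ converted into the weak $L^2$ bound by integrating $e^{2\rho t}\,dt$ over the sublevel set in $t$. The only difference is cosmetic: you rescale by $\delta_{-t}$ first and decompose the kernel globally with an approximate-identity majorant for the error, whereas the paper keeps $\delta_t$ inside the kernel, splits the domain at $|\bar m|=e^{-t}$, and controls the outer difference by the mean value theorem.
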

\begin{proof}
In view of the Iwasawa decomposition $G=\bar{N}AK$ the symmetric space $X$ can be identified with
 $\bar{N}\times\R$ via the map $(\bar{n},t)\rightarrow \bar{n}a_t\cdot o$. If $F\in L^2(K/M)$ and $\bar na_t\cdot o \in X$, 
then from the definition (\ref{poissonX}) of the Poisson transform and (\ref{ktonbar}) we have,
$$\mathcal{P}_\lambda F(\bar n a_t\cdot o)=\int_{\bar N} e^{-(i\lambda+\rho)H(a_{-t}\bar n^{-1}k(\bar m))}F(k(\bar m)M)
e^{-2\rho(H(\bar m))}d\bar m .$$
 As $A$ normalizes $N$ it follows from the Iwasawa decomposition (\ref{iwasawa}) that
$$-H(a_{-t}\bar n^{-1}k(\bar m))=-H(\delta_t(\bar n^{-1}\bar m))+H(\bar m)+t\label{hnbar}$$
(see \cite [page 518]{Io2}). By using the last two equality we get,
\bea\mathcal{P}_\lambda F(\bar n a_t\cdot o)&=& e^{(i\lambda+\rho)t}\int_{\bar N} 
e^{-(i\lambda+\rho)H(\delta_t(\bar n^{-1}\bar m)}F(k(\bar m)M) e^{(i\lambda-\rho)(H(\bar m))}d\bar m\nonumber \\
&=& e^{(i\lambda+\rho)t}\int_{\bar N} 
e^{-(i\lambda+\rho)H(\delta_t(\bar m))}\psi(\bar n \bar m)\; d\bar m,\label{prop1}
\eea
where $\psi$ is a function defined on $\bar N$ given by $\psi(\bar m)=F(k(\bar m)M) e^{(i\lambda-\rho)(H(\bar m))}$. 
Using the integral formula (\ref{ktonbar}) one has $\|\psi\|_{L^2(\bar N)}=\|F\|_{L^2(K/M)}$.
If we write $\bar m=(V,Z)$ then $e^{\rho H(\bar m)}$ is given by the formula 
$$ e^{\rho(H(\bar m))}=[(1+c|V|^2)^2+4c|Z|^2]^{\frac 1 4(m_1+2m_2)},$$
where $c^{-1}=4(m_1+4m_2)$ (see \cite[Chapter II, Theorem 6.1]{He5}). Thus from the above formula, definition of norm 
(\ref{norm}) and relation $\delta_t(\bar m)=(e^tV,e^{2t}Z)$ we have,
\bea e^{-(i\lambda+\rho)H(\delta_t(\bar m))}&=&
\frac{1}{[(1+c|e^tV|^2)^2+4c|e^{2t}Z|^2]^{(i\lambda+\rho)/2}}\nonumber\\
&=& \frac{1}{e^{2(i\lambda+\rho)t}[(e^{-2t}+c|V|^2)^2+4c|Z|^2]^{(i\lambda+\rho)/2}}\nonumber\\
&=& \frac{1}{e^{2(i\lambda+\rho)t}[(e^{-4t}+2ce^{-2t}|V|^2)+|\bar m|^4]^{(i\lambda+\rho)/2}}.\label{prop2}
\eea
By using (\ref{prop2}) in (\ref{prop1})  we get the following expression of the Poisson transform 
\be\mathcal{P}_\lambda F(\bar n a_t\cdot o)= e^{-(i\lambda+\rho)t}\int_{\bar N} 
\frac{1}{[(e^{-4t}+2ce^{-2t}|V|^2)+|\bar m|^4]^{(i\lambda+\rho)/2}} \psi(\bar n\bar m)\;d\bar m. \label{proposition}
\ee
We will now follow an argument of  Sj\"ogren  \cite[page 108]{So} to dominate the above integral by two maximal 
functions whose $L^p$ behaviors are known. 
We first restrict the integral over the ball $ B(e^{-t})=\{\bar m\in\bar N : |\bar m|\leq e^{-t}\}$ in $\bar N$. 
Since the Haar measure of $\bar N$ is the Lebesgue measure it follows that the measure of $B(e^{-t})$ is proportional to $e^{-Qt}$.
 In this case we have,
\bea \left|\int_{B(e^{-t})} 
\frac{1}{[(e^{-4t}+2ce^{-2t}|V|^2)+|\bar m|^4]^{(i\lambda+\rho)/2}} \psi(\bar n\bar m)\;d\bar m\right|
&\leq& \frac{1}{e^{-Qt}} \int_{B(e^{-t})} |\psi(\bar n\bar m)|\;d\bar m\nonumber\\
&\leq& C M_0\psi(\bar n),\label{first}
\eea 
where $M_0$ is the standard Hardy--Littlewood maximal operator on $\bar N$. It is well known that 
the operator $M_0$ is bounded from $L^p(\bar N)$ to $L^p(\bar N)$ for $1<p\leq \infty$ \cite[Theorem 1, page 13]{Stn}.
For $|\bar m|>e^{-t},$ we will compare the kernel with $|\bar m|^{-(Q+i2\lambda)}$. We claim that,
\be \left| \frac{1}{[(e^{-4t}+2ce^{-2t}|V|^2)+|\bar m|^4]^{(i\lambda+\rho)/2}} - \frac{1}{|\bar m|^{(Q+i2\lambda)}}\right|
\leq C \frac{e^{-t}}{|\bar m|^{Q+1}},\label{compare1}\ee
for all $\bar m \in B(e^{-t})$. Consider the function $\phi:(0,\infty)\lgra \C$ defined by $\phi(r)=r^{-Q-2i\lambda}$. If we take 
$r=|\bar m|$ and $s=[(e^{-4t}+2ce^{-2t}|V|^2)+|\bar m|^4]^{1/4}$ then it follows from the mean value theorem that there exists
 $r_0 \in (r,s)$ such that 
$$|\phi(s)-\phi(r)|=|s-r||\phi{'}(r_0)|=C|s-r|r_0^{-Q-1}.$$ 
Since $e^{-t}<|\bar m|$ we have $|\bar m|\leq r_0\leq 3 |\bar m|$ and $|s-r|\leq C e^{-t}$.
 This proves our claim. We also have the following estimate 
\bea \int_{|\bar m|>e^{-t}} \frac{e^{-t}}{|\bar m|^{Q+1}}|\psi(\bar n\bar m)|\; d\bar m
&=& \sum _{j=0}^{\infty}\int_{2^j e^{-t}<|\bar m|\leq 2^{j+1}e^{-t}}\frac{e^{-t}}{|\bar m|^{Q+1}}|\psi(\bar n\bar m)|\; d\bar m
\nonumber\\ &\leq&  \sum _{j=0}^{\infty}\frac{e^{-t}}{(2^j e^{-t})^{Q+1}}\int_{B(2^{j+1}e^{-t})}|\psi(\bar n\bar m)|\; d\bar m
\nonumber \\&\leq& C_Q\sum _{j=0}^{\infty}\frac{1}{2^j|B(2^{j+1}e^{-t})|}\int_{B(2^{j+1}e^{-t})}|\psi(\bar n\bar m)|\; d\bar m
\nonumber \\
&\leq&  C_Q M_0\psi(\bar n),\label{maximal} \eea
where $|B(2^{j+1}e^{-t})|$ denotes the Haar measure of $B(2^{j+1}e^{-t})$.
If we combine the inequalities (\ref{compare1}) and (\ref{maximal}) we get 
\be \left|\int_{|\bar m|>e^{-t}} 
\frac{1}{[(e^{-4t}+2ce^{-2t}|V|^2)+|\bar m|^4]^{(i\lambda+\rho)/2}} \psi(\bar n\bar m)\;d\bar m\right|\leq C T\psi
(\bar n),\label{final}\ee
where $T\psi(\bar n)= M_0\psi(\bar n)+T_*\psi(\bar n)$.
From (\ref{proposition}), (\ref{first}) and (\ref{final}) we now have the following pointwise estimate of the Poisson transform 
$$|\mathcal{P}_\lambda F(\bar n a_t\cdot o)|\leq C e^{-\rho t} T\psi(\bar n)).$$
We will use the above estimate to show that the Poisson transform belongs to weak $L^2$. 
In this regard, for any $\beta>0$ we have,

\be
\mu\{\bar na_t \in X | |\mathcal{P}_\lambda F(\bar n a_t\cdot o)|>\beta\} \leq 
\int_{\bar N}\int_{-\infty}^{t_0}e^{2\rho t}dt\; d\bar n= \frac C {\beta^2} \int_{\bar N} T\psi(\bar n)^2\;d\bar n,
\label{prop3}\ee
where $\frac {C T\psi(\bar n)}{\beta^2}=e^{2\rho t_0}$ and $\mu$ denotes the $G$ invariant measure on $X$.
We know from Theorem \ref{important} that the operator  $T_*$ is bounded from $L^2(\bar N)$ to $L^2(\bar N)$ and hence so is 
$T$. 
Therefore from (\ref{lorentznorm}) and (\ref{prop3}) we get
$$\|\mathcal{P}_\lambda F\|_{L^{2,\infty}(X)}\leq C_\lambda \|\psi\|_{L^2(\bar N)} = C_\lambda \|F\|_{L^2(K/M)}.$$
This complete the proof.
\end{proof} 
To complete the proof of the Theorem \ref{main} we will now invoke a result of Ionescu \cite[Theorem 1]{Io2}.
For a locally integrable function $u$ on $X$ define 
$$M(u)=\left(\limsup_ {R\rightarrow \infty}\frac 1 R\int_{(B(o,R))}|u(x)|^2\;dx\right)^{1/2}.$$
It was proved in \cite{Io2} that if $\Delta u=-(\lambda^2+\rho^2)u$ for some $\lambda \in \R\setminus\{0\}$ and $M(u)<\infty$
then $u=\mathcal P_\lambda F$ for some $F\in L^2(K/M)$ (see also \cite{BS}).
Theorem \ref{main} will follow immediately once we prove the following simple lemma (although it was proved in \cite{KRS1}
but for the sake of completeness we provide the detail here).
\begin{Lemma}
 If $\Delta u=-(\lambda^2+\rho^2)u$ for $\lambda\in \R\setminus\{0\}$ and $u\in L^{2,\infty}(X)$
then $$M(u)\leq C \|u\|_{L^{2,\infty}(X)}.$$
\end{Lemma}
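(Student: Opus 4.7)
The plan is to first establish a pointwise bound $\|u\|_{L^\infty(X)}\leq C_\lambda\|u\|_{L^{2,\infty}(X)}$ using the eigenfunction property, and then combine this with a careful layer-cake decomposition and the exponential volume growth of $X$ to deduce $\int_{B(o,R)}|u|^2\,dx\leq C_\lambda R\|u\|_{L^{2,\infty}(X)}^2$ for large $R$.

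For the pointwise bound, I would select a $K$-biinvariant test function $\varphi\in C_c^\infty(G)$ whose spherical Fourier transform satisfies $\widehat\varphi(\lambda)=1$; the existence of such a $\varphi$ follows from the Paley--Wiener theorem for the spherical transform in rank one. Since $u$ is an eigenfunction of $\Delta$ with eigenvalue $-(\lambda^2+\rho^2)$, the classical convolution identity $u\ast\varphi=\widehat\varphi(\lambda)u=u$ yields the reproducing formula
\[u(x)=\int_{G}u(y)\varphi(y^{-1}x)\,dy.\]
If $\varphi$ has support contained in $\{g\in G\mid r(g)\leq r\}$, the integral is effectively over $B(x,r)\subset X$, whose volume equals $V(r):=\mu(B(o,r))$ by $G$-invariance of $\mu$. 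Combining this with the elementary weak-$L^2$ estimate $\int_E|u|\,dx\leq 2\|u\|_{L^{2,\infty}(X)}\sqrt{|E|}$, obtained by optimizing the layer-cake decomposition at $s_0=\|u\|_{L^{2,\infty}(X)}/\sqrt{|E|}$, one deduces the uniform bound $|u(x)|\leq 2\|\varphi\|_\infty\sqrt{V(r)}\|u\|_{L^{2,\infty}(X)}$.

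Next, I would apply the layer-cake formula to $\int_{B(o,R)}|u|^2\,dx=\int_0^{\|u\|_\infty}2s\,\mu\{x\in B(o,R):|u(x)|>s\}\,ds$, dominating the distribution function by $\min\bigl(V(R),\|u\|_{L^{2,\infty}(X)}^2/s^2\bigr)$ and splitting at the threshold $s_0=\|u\|_{L^{2,\infty}(X)}/\sqrt{V(R)}$ where the two bounds balance. This gives
\[\int_{B(o,R)}|u|^2\,dx\leq\|u\|_{L^{2,\infty}(X)}^2\left(1+2\log\frac{\|u\|_\infty\sqrt{V(R)}}{\|u\|_{L^{2,\infty}(X)}}\right).\]
The integration formula (\ref{cartan}) gives $V(R)\leq Ce^{2\rho R}$, so $\log V(R)\leq 2\rho R+C'$. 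Together with the $L^\infty$ bound from the previous paragraph, the logarithm is $O(R)$, producing $\int_{B(o,R)}|u|^2\,dx\leq C_\lambda R\|u\|_{L^{2,\infty}(X)}^2$ for sufficiently large $R$. Dividing by $R$ and taking $\limsup$ finishes the argument.

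The main obstacle is the first step: since weak $L^2$ membership by itself does not imply pointwise boundedness, the eigenfunction property must enter in an essential way. The argument hinges on the classical convolution relation $u\ast\varphi=\widehat\varphi(\lambda)u$ and on producing a compactly supported $K$-biinvariant $\varphi$ whose spherical transform takes the value one at the real parameter $\lambda\neq 0$; both are standard but nontrivial facts from Harish-Chandra's spherical analysis on rank one symmetric spaces. It is also worth noting that the $O(R)$ size of the logarithmic factor is exactly matched by the $\frac1R$ normalization in $M(u)$, so any worse volume growth (for instance, in higher rank) would destroy this clean conclusion.
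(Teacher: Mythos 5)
Your proof is correct, and it shares the paper's overall skeleton --- a pointwise bound on $u$ extracted from the eigenfunction property, followed by a layer-cake computation against the weak-$L^2$ distribution bound and the volume growth $\mu(B(o,R))\asymp e^{2\rho R}$ --- but the key pointwise input is obtained by a genuinely different mechanism. The paper invokes the argument of \cite[Lemma 2]{LR} to get only the exponential-growth estimate $|u(ka_t\cdot o)|\leq C\|u\|_{L^{2,\infty}(X)}e^{\alpha t}$, and then bounds $\int_0^{e^{2\rho R}}\min\{e^{2\alpha R},1/t\}\,dt\leq C(1+2\rho R+2\alpha R)$, whereas you use the spherical reproducing identity $u\ast\varphi=\widehat{\varphi}(\lambda)u$ for a compactly supported $K$-biinvariant $\varphi$ with $\widehat{\varphi}(\lambda)=1$, together with the embedding $\int_E|u|\leq 2\|u\|_{L^{2,\infty}(X)}|E|^{1/2}$, to obtain the stronger \emph{uniform} bound $\|u\|_\infty\leq C_\lambda\|u\|_{L^{2,\infty}(X)}$; in either case the quantity inside the logarithm is $e^{O(R)}$, which is all that is needed. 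Your route costs two standard but nontrivial inputs --- the mean-value property $\int_K u(xky)\,dk=u(x)\phi_\lambda(y)$ (valid here because, as the paper notes, radial eigenfunctions regular at $o$ are multiples of $\phi_\lambda$ in rank one) and the Paley--Wiener theorem for the spherical transform --- but it buys a cleaner, $t$-independent estimate and shows directly that weak $L^2$ eigenfunctions with $\lambda\in\R\setminus\{0\}$ are bounded, consistent with the uniform boundedness of $\mathcal{P}_\lambda F$ for $F\in L^2(K/M)$ that the characterization theorem ultimately implies. Two small points to tidy up: the existence of $\varphi$ requires a one-line normalization (choose $\varphi_0$ with $\widehat{\varphi_0}(\lambda)\neq 0$, which is possible by adjusting the support radius, then rescale), and in the layer-cake step you should note that if the splitting point $s_0=\|u\|_{L^{2,\infty}(X)}/\sqrt{V(R)}$ exceeds $\|u\|_\infty$ then the first integral alone already gives the bound $\|u\|_{L^{2,\infty}(X)}^2$, so the logarithmic term can be taken nonnegative. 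Neither is a gap.
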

\begin{proof}It suffices to show that 
\be\int_{(B(o,R))}|u(x)|^2\;dx\leq C R\|u\|_{L^{2,\infty}(X)}^2 \;\;\ \text{for all}\;\ R>0 \label{enough}.\ee
Since the inequality (\ref{enough}) follows easily for $R\leq 1$ we will concentrate only in the case $R> 1$.
If $u^*$ denotes the decreasing rearrangement of $u$ then it follows from definition of weak $L^2$ spaces that
 \be u^{*}(s)^2\leq\frac{1}{s}\|u\|_{2,\infty}^{2},\quad\mbox{for all $s>0$.}\label{weakl2}\ee
 It follows from the similar argument given in \cite[Lemma 2]{LR} (see also \cite{KRS1}) that 
$$|u(ka_t.o)|\leq C \|u\|_{L^{2,\infty}(X)} e^{\alpha t} \;\;\;\ \text{for some $\alpha >0$ and for all $t>0$}.$$
Since
$(\chi_{B(o,R)})^{*}= \chi_{(0, |B(o,R)|)}$  (see \cite[page 46]{G}) it follows from the above inequality that 
\be (\chi_{B(o,R)}|u|)^{*}(t)^2\leq C\|u\|_{2,\infty}^2 e^{2\alpha R}\chi_{(0, |B(o,R)|)}(t).\label{rearrange-est} \ee
Using the fact that the $G$-invariant measure $\mu B(o,R)$ of the ball $\mu B(o,R)$ 
is propositional to $e^{2\rho R}$ for $R\geq 1$ it follows from 
(\ref{weakl2}) and (\ref{rearrange-est}) that 
\bea\int_{B(o,R)}|u(x)|^2dx&\leq& C \int _0^{\mu B(o,R)} u^*(s)^2 ds\nonumber\\
&\leq& C\int_0^{\mu B(o,R)}\min\left\{\|u\|_{2,\infty}^{2}e^{2\alpha
R},\|u\|_{2,\infty}^{2}\frac{1}{t}\right\}dt\nonumber\\
&\leq& C\|u\|_{2,\infty}^{2}\int_0^{e^{2\rho
R}}\min\left\{e^{2\alpha
R},\frac{1}{t}\right\}dt\nonumber\\
&=& C\|u\|_{2,\infty}^{2}\left(\int_0^{e^{-2\alpha R}}e^{2\alpha
R}dt+\int_{e^{-2\alpha R}}^{e^{2\rho
R}}\frac{dt}{t}\right)\nonumber\\
&=& C\|u\|_{2,\infty}^{2}\left(1+2\rho R+2\alpha
R\right)\nonumber\\
&\leq & C\|u\|_{2,\infty}^{2}R,\label{Rbigger1} \eea
This completes the proof.
\end{proof}
 
An immediate consequence of the above results is  \cite[Proposition 4]{Io2}:
\begin{Corollary}
 If $\lambda \in \mathfrak a^*\setminus \{0\}$ and $F\in L^2(K/M)$ then 
$$\left( \sup_R \frac 1 R \int_{B(o,R)}|\mathcal P_\lambda F(x)|^2\;dx\right)^{1/2}\leq C_\lambda \|F\|_{L^(K/M)}.$$
\end{Corollary}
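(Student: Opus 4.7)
The plan is to deduce this corollary directly by chaining Proposition~3.1 with the pointwise estimate established inside the proof of the preceding Lemma. No new analytic input will be needed; the proof is essentially a packaging of previously proved facts.

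First, I will observe that $u := \mathcal{P}_\lambda F$ is an eigenfunction of $\Delta$ with eigenvalue $-(\lambda^2+\rho^2)$, so the hypothesis $\Delta u = -(\lambda^2+\rho^2)u$ of the Lemma is satisfied. Next, Proposition~3.1 supplies the weak $L^2$ control
$$\|\mathcal{P}_\lambda F\|_{L^{2,\infty}(X)} \leq C_\lambda \|F\|_{L^2(K/M)}.$$
In particular $u \in L^{2,\infty}(X)$, so the full conclusion of the Lemma is available for $u=\mathcal{P}_\lambda F$.

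Second, I will point out that the proof of the Lemma actually establishes more than the statement $M(u) \leq C\|u\|_{L^{2,\infty}(X)}$ asserts: the intermediate inequality (\ref{enough}), namely
$$\int_{B(o,R)} |u(x)|^2\,dx \leq C R\, \|u\|_{L^{2,\infty}(X)}^2 \quad\text{for all } R>0,$$
is a \emph{uniform} bound rather than only a $\limsup$ statement. This is the form I really need. Dividing by $R$ and taking $\sup_{R>0}$ preserves the same right-hand side, so
$$\sup_{R>0} \frac{1}{R}\int_{B(o,R)} |\mathcal{P}_\lambda F(x)|^2 \,dx \leq C\, \|\mathcal{P}_\lambda F\|_{L^{2,\infty}(X)}^{2}.$$

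Third, combining this with the weak $L^2$ bound from Proposition~3.1 and taking the square root yields
$$\left(\sup_{R>0} \frac{1}{R}\int_{B(o,R)} |\mathcal{P}_\lambda F(x)|^2 \,dx\right)^{1/2} \leq C\, \|\mathcal{P}_\lambda F\|_{L^{2,\infty}(X)} \leq C_\lambda \|F\|_{L^2(K/M)},$$
which is the desired inequality. There is no genuine obstacle here; the only point worth double-checking is that the constant $C$ in (\ref{enough}) is independent of $R$, which is manifest from the proof (the factor of $R$ arises explicitly from $\int_{e^{-2\alpha R}}^{e^{2\rho R}} dt/t \asymp R$, while the remaining contribution is $O(1)$).
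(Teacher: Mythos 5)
Your proof is correct and is exactly the route the paper intends: it states the corollary as ``an immediate consequence of the above results,'' meaning precisely the combination of Proposition~3.1 with the uniform bound (\ref{enough}) extracted from the Lemma's proof, as you do. Your observation that (\ref{enough}) is a uniform-in-$R$ estimate (not merely a $\limsup$ statement) is the right point to isolate, and your verification of the eigenfunction hypothesis for $u=\mathcal{P}_\lambda F$ closes the argument.
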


\section{some consequences}
Theorem \ref{main} has certain consequences which are worth mentioning. One of them is related to the restriction (like) 
theorem for the Helgason Fourier 
transform on $X$. The idea of Fourier restriction theorem on $\R^n (n\geq2)$ originated in the work of Stein. 
The celebrated Tomas--Stein restriction Theorem says that the Fourier transform $\hat{f}$ of a function $f\in L^p(\R^n)$ has 
a well defined restriction on the sphere $S^{n-1}$ 
via the inequality, 
$$\|\what f|_{S^{n-1}}\|_{L^2(S^{n-1})}\leq C_p\|f\|_{L^p(\R^n)}\;\;\;\text{for all}\;\ 1\leq p\leq\frac{2n+3}{n+3}$$    
(see \cite[page 365]{Stn}). We are interested in similar inequalities for the Helgason Fourier transform of suitable functions on $X$. 
Given $f\in C_c^\infty(X)$ the Helgason Fourier transform $\widetilde{f}$ of $f$ is defined by \cite[page 199]{He5} 
\be\widetilde{f}(\lambda,b)=\int_X f(x)e^{(i\lambda+\rho)A(x,b)}\;dx,\;\;\;\ \lambda\in \C,\;\;b\in K/M.\label{fourierx}\ee
For fixed $\lambda \in \C$ the norm inequality of the form
$$\left(\int_{K/M}|\widetilde{f}(\lambda,b)|^qdb\right)^{1/q}\leq C\|f\|_{L^p(X)}$$
can be thought as an analogue of Fourier restriction theorem in the context of a symmetric space $X$. For $1\leq p<2$ the analogue 
of restriction theorem for $L^p$ functions on $X$ with rank~$X=1$ was proved in \cite{LR}. This result was extended for more general
 spaces in \cite{RS}, \cite{KRS}. It was also shown in 
\cite{KRS} that the best posible analogue of restriction theorem for $\lambda\in\R$ is the following: Given $\lambda\in\R\setminus 
\{0\}$ there exists a constant $C_{\lambda,p}>0$ such 
that the following inequality holds $$\left(\int_{K/M}|\widetilde{f}(\lambda,b)|^2db\right)^{1/2}\leq C_{\lambda ,p}\|f\|_{L^p(X)},
\:\:\:\:1\leq p<2 .$$ 
It is the end point case of the above inequality which we are interested in.   
It turns out that this problem can be solved very easily by using estimates of the Poisson transform. 
We first observe that for a given $\lambda\in \C$ and $F\in C^\infty(K/M)$ the Poisson transform $\mathcal{P}_\lambda F$
is related to the Helgason Fourier transform $\widetilde{f}(\lambda, \cdot)$  
as follows: $$\int_{K/M}\widetilde{f}(\lambda,b)F(b)\;db= \int_X f(x)\mathcal{P}_\lambda F(x)\;dx.$$
By using the estimate (\ref{estimate}) and the above equation we have the following version of the restriction Theorem :
\begin{Theorem}
 If $\lambda \in \R\setminus\{0\}$ then there exists a constant $C_\lambda>0$ such that,
\be\left(\int_{K/M}|\wtilde{f}(\lambda, b)|^2 db\right)^{1/2}\le C_\lambda \|f\|_{L^{2, 1}(X)},\;\;\ \text{for all}\;\;f\in L^{2,1}(X).
\label{restX}\ee
\label{rest}\end{Theorem}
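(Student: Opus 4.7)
The plan is to prove Theorem \ref{rest} by a straightforward duality argument, using the duality between Poisson transforms and Helgason Fourier transforms together with the estimate (\ref{estimate}) established in Theorem \ref{main} and the Lorentz-space duality $(L^{2,1})^{*}=L^{2,\infty}$.

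First I would express the left hand side of (\ref{restX}) via $L^{2}$ duality on $K/M$. For $f\in C_{c}^{\infty}(X)$,
\[
\left(\int_{K/M}|\wtilde f(\lambda,b)|^{2}\,db\right)^{1/2}
= \sup_{\|F\|_{L^{2}(K/M)}\le 1}\left|\int_{K/M}\wtilde f(\lambda,b)\,F(b)\,db\right|.
\]
Next I would invoke the duality identity
\[
\int_{K/M}\wtilde f(\lambda,b)\,F(b)\,db = \int_{X} f(x)\,\mathcal{P}_{\lambda}F(x)\,dx,
\]
which is an immediate consequence of Fubini applied to the defining integrals (\ref{poissonX}) and (\ref{fourierx}) and which is noted in the paragraph just before the theorem.

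Then I would estimate the right hand side by Hölder's inequality in Lorentz spaces: since the dual of $L^{2,1}(X)$ is $L^{2,\infty}(X)$, there is a constant $C>0$ with
\[
\left|\int_{X} f(x)\,\mathcal{P}_{\lambda}F(x)\,dx\right|
\le C\,\|f\|_{L^{2,1}(X)}\,\|\mathcal{P}_{\lambda}F\|_{L^{2,\infty}(X)}.
\]
Applying estimate (\ref{estimate}) from Theorem \ref{main} gives $\|\mathcal{P}_{\lambda}F\|_{L^{2,\infty}(X)}\le C_{\lambda}\|F\|_{L^{2}(K/M)}\le C_{\lambda}$, and combining the three displays yields the desired inequality (\ref{restX}) for $f\in C_{c}^{\infty}(X)$.

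The only residual issue (and really the only thing to watch) is extending the conclusion from $C_{c}^{\infty}(X)$ to general $f\in L^{2,1}(X)$; this is routine since $C_{c}^{\infty}(X)$ is dense in $L^{2,1}(X)$, and the map $f\mapsto \wtilde f(\lambda,\cdot)$, a priori defined on $C_{c}^{\infty}(X)$, extends by continuity to $L^{2,1}(X)$ with values in $L^{2}(K/M)$ thanks to the uniform bound just derived. I do not anticipate any genuine obstacle here; all of the analytic work has been absorbed into Theorem \ref{main}.
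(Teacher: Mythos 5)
Your proposal is correct and follows essentially the same route as the paper: the paper's proof is exactly the duality computation $\int_{K/M}\wtilde f(\lambda,b)F(b)\,db=\int_X f\,\mathcal{P}_\lambda F\,dx$ followed by Lorentz-space H\"older and the estimate (\ref{estimate}). Your extra remarks on density and extension from $C_c^\infty(X)$ are fine but are left implicit in the paper.
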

\begin{proof}
 If $F\in L^2(K/M)$ then using the fact that the dual of $L^{2,1}(X)$ is $L^{2,\infty}(X)$ and the estimate (\ref{estimate}) we get
\bea\left|\int_{K/M}|\wtilde{f}(\lambda, b)F(b)db\right|&=&\left|\int_X f(x)\mathcal{P}_\lambda F(x)\;dx\right|\nonumber\\
&\leq& \|f\|_{L^{2,1}(X)}\|P_{\lambda}F\|_{L^{2,\infty}(X)}\nonumber\\
&\leq& C_{\lambda}\|f\|_{L^{2,1}(X)}\|F\|_{L^2(K/M)}.\nonumber\eea
\end{proof}
Theorem \ref{rest} can be used to deduce an interesting analytic property of the spectral projection operator 
considered in \cite{Str1}. Authors of \cite{CGM}
used the Kunze Stein phenomenon to prove that the spectral projection operator $f\mapsto f\ast\phi_{\lambda}$ satisfies the estimate 
$\|f\ast\phi_{\lambda}\|_{L^{p'}(X)}\leq C_p\|f\|_{L^p(X)}$ for $\lambda\in\R$ and $1\leq p<2$ (the result is
 valid even if rank~$X>1$).
We now present the following end point estimate of the spectral projection operator which is closely related to
 the behavior of the noncentral 
Hardy Littlewood maximal operator on X (see \cite{Io1}).  
\begin{Corollary}
 If $\lambda \in \R\setminus\{0\}$ then the linear map $f\lgra f\ast \phi_\lambda$ is restricted weak type $(2,2)$.
\end{Corollary}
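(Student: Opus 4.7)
The plan is to express the spectral projection $f \ast \phi_\lambda$ as a composition of the Helgason Fourier transform at $\lambda$ followed by a Poisson transform, and then chain the two estimates already available: the restriction inequality (Theorem \ref{rest}) controls the Fourier transform, and the main Poisson estimate (\ref{estimate}) controls the Poisson transform.

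First I would recall the standard identity relating the spherical convolution to the Poisson transform. Writing $\phi_\lambda = \mathcal{P}_\lambda 1$, one has (for $f \in C_c^\infty(X)$) the factorization
\begin{equation}
(f \ast \phi_\lambda)(x) \;=\; \int_{K/M} \widetilde{f}(\lambda,b)\, e^{(-i\lambda+\rho)A(x,b)}\,db \;=\; \mathcal{P}_{-\lambda}\!\bigl(\widetilde{f}(\lambda,\cdot)\bigr)(x),
\end{equation}
which follows from a direct computation using Fubini and the cocycle property of $A(x,b)$; this is the rank-one analogue of the standard "spectral projector = Poisson composed with restriction" identity. This is the only non-packaging step, and it is essentially a bookkeeping exercise rather than an obstacle.

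Next, to show $f \mapsto f \ast \phi_\lambda$ is restricted weak type $(2,2)$, it suffices, by density, to show
$$\|f \ast \phi_\lambda\|_{L^{2,\infty}(X)} \leq C_\lambda \|f\|_{L^{2,1}(X)} \qquad \text{for all } f \in L^{2,1}(X) \cap C_c^\infty(X).$$
Using the factorization above, Theorem \ref{rest} applied at the spectral parameter $\lambda$ gives
$$\bigl\| \widetilde{f}(\lambda,\cdot) \bigr\|_{L^2(K/M)} \leq C_\lambda \|f\|_{L^{2,1}(X)}.$$
Since $-\lambda \in \R\setminus\{0\}$, the estimate (\ref{estimate}) in Theorem \ref{main} applies with $-\lambda$ in place of $\lambda$, yielding
$$\bigl\| \mathcal{P}_{-\lambda} G \bigr\|_{L^{2,\infty}(X)} \leq C_\lambda \|G\|_{L^2(K/M)}$$
for every $G \in L^2(K/M)$. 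Taking $G = \widetilde{f}(\lambda,\cdot)$ and combining the two bounds produces the desired inequality with constant $C_\lambda^2$.

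The argument is thus a clean duality/composition argument, and no genuinely new technical difficulty arises; the hardest ingredient (the weak $L^2$ bound for the Poisson transform at real nonzero spectral parameter) is already done, and the factorization identity is classical. One small point to be careful about is that Theorem \ref{rest} is stated for $f \in L^{2,1}(X)$ via the duality between $L^{2,1}$ and $L^{2,\infty}$, so the same duality (applied in reverse, together with density of $C_c^\infty$ in $L^{2,1}$) is what finally delivers the conclusion for general $f \in L^{2,1}(X)$.
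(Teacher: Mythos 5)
Your proposal is correct and follows essentially the same route as the paper: factor the spectral projection as a Poisson transform applied to the Helgason Fourier transform (the paper writes $f\ast\phi_\lambda = \mathcal P_{\lambda}(\wtilde f(-\lambda,\cdot))$, equivalent to your version since $\phi_\lambda=\phi_{-\lambda}$), then chain the restriction estimate (\ref{restX}) with the weak $L^2$ Poisson bound (\ref{estimate}). The only difference is cosmetic: the paper cites the factorization identity from Helgason rather than rederiving it.
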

\begin{proof}
 From \cite[Lemma 4.4]{H} we have the relation 
 $f\ast\phi_\lambda(x)= \mathcal P_{\lambda}(\wtilde f(-\lambda, \cdot))(x)$.
Using (\ref{estimate}), (\ref{restX}) and the above relation we have
\be \|f\ast\phi_\lambda\|_{L^{2,\infty}(X)}= \|\mathcal P_{\lambda}(\wtilde f(-\lambda, \cdot))\|_{L^{2,\infty}(X)}\leq
C_\lambda\|\wtilde f(-\lambda, \cdot)\|_{L^2(K/M)}\leq C_\lambda\|f\|_{L^{2,1}(X)}.\label{corna}\ee
\end{proof}
\begin{Remark}
{\em If $1\leq p<2$ and $\lambda=\alpha+i\gamma_p\rho$ ($\alpha\in\R$) then it follows from 
\cite[Theorem 1.1]{KRS} that $\|f\ast\phi_\lambda\|_{L^{p',\infty}(S)}\leq C_{\lambda}\|f\|_{L^{p,1}(S)}$. 
By using the standard estimate $|\phi_{\alpha+i\gamma_{p'}\rho}(x)|\asymp \kappa_p(x)$ where $\kappa_p$ is the radial 
function defined by $\kappa_p(x)=e^{\frac {-2\rho r(x)}{p'}}$
we get the estimate
\be\|f\ast\kappa_p\|_{L^{p',\infty}(X)}\leq C_{\lambda}\|f\|_{L^{p,1}(X)}\;\;\;  1\leq p<2.\ee
For $p=2$  and $\alpha\in\R\setminus\{0\}$ we have the estimate $|\phi_{\alpha}(x)|\leq C_{\alpha}\kappa_2(x)$. 
Since the spectral projection operator, for 
$\lambda\in\R\setminus\{0\}$, is restricted weak type $(2,2)$ one may ask whether the same holds for the operator 
obtained by convolving with the larger kernel $\kappa_2$.   
By modifying an example given in \cite{Io3} we will show that this is not the case. We first recall the following properties of the 
function $r(x)$ from \cite[page 167]{GV} which will be needed
\be r(ka_t)=r(a_t)=t,\;\;\; r(x)=r(x^{-1}), \;\;\: 
r(xy)\leq r(x)+r(y)\label{remark1}\ee
for all $k\in K$, $x,y\in G$ and $t>0$. We also use the functional equation for the elementary spherical functions 
\cite[Chapter IV]{H}
\be\int_K\phi_0(xky)\;dk=\phi_0(x)\phi_0(y)\label{functional},\ee
and the estimate \cite[page 648]{ADY} 
\be\phi_0(x)\asymp (1+r(x))e^{-\rho r(x)}.\label{phi0}\ee
Consider the radial function $f:X\lgra (0,\infty)$ given by
$f(x)=e^{-\rho r(x)}(1+r(x))^{-3/2}$.
It follows from the calculation \cite[page 92]{Io3} that  
$f\in L^{2,1}(X)$. We will show that $f\ast\kappa_2\notin L^{2,\infty}(X)$. 
By using the Cartan decomposition (\ref{cartan}) we have,
\be f\ast\kappa_2(a_s)=\int_0^\infty e^{-\rho t}(1+t)^{-3/2}\int_K e^{-\rho r(a_{-t}ka_s)}dkJ(t)dt \label{remark2}\ee
where $J(t)=(\sinh t)^{m_1}(\sinh 2t)^{m_2}$.
From (\ref{remark1}), (\ref{functional}) and (\ref{phi0}) and  we get 
$$\int_K e^{-\rho r(a_{-t}ka_s)}dk =\int_K e^{-\rho r(a_{-t}ka_s)}\frac{(1+r( a_{-t}ka_s))}{(1+r( a_{-t}ka_s))}dk
\geq C\frac{1}{1+t+s}\phi_0(a_t)\phi_0(a_s)$$
If we use estimates above, (\ref{phi0}) and $J(t)\asymp e^{2\rho t}$ (for $t\geq 1$) then from (\ref{remark2}) we have
\be f\ast\kappa_2(a_s)\geq C \frac{1+s}{1+2s}e^{-\rho s} \int_1^{s}e^{-2\rho t}(1+t)^{-1/2}e^{2\rho t}dt
\geq C s^{1/2} e^{-\rho s}\ee
for large $s$.
It is now easy to see that $f\ast\kappa_2\notin L^{2,\infty}(X)$.}
\end{Remark}
\noindent
 {\bf Comment regarding the Fourier transform on Damek--Ricci Spaces:}
The notion of Helgason Fourier transform is also meaningful for functions on Damek--Ricci space
\cite{ACD}. It is well known that Damek--Ricci spaces include all  Riemannian symmetric spaces of
 noncompact type with real rank one \cite{ADY}.
The analogue of restriction theorem in this setup was proved in \cite{RS} and \cite{KRS}. 
By using arguments similar to symmetric spaces one can prove the following version of Theorem \ref{rest} for 
Damek--Ricci spaces $S$. 
\begin{Theorem}
 If $\lambda \in \R\setminus\{0\}$ and $f\in L^{2,1}(S)$ then,
\be\left(\int_N|\wtilde{f}(\lambda, n)|^2 dn\right)^{1/2}\le C_\lambda \|f\|_{L^{2, 1}(S)}.\label{restS}\ee
\label{restna}\end{Theorem}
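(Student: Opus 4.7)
The plan is to mirror the proof of Theorem \ref{rest} essentially verbatim, with the symmetric space $X = G/K$ replaced by the solvable Damek--Ricci space $S = NA$ and the boundary $K/M$ replaced by $N$ (together with a point at infinity). The backbone of the argument rests on two ingredients: a duality pairing between the Helgason Fourier transform and the Poisson transform, and a weak $L^2$ estimate for the Poisson transform on $S$.

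First, I would establish the Damek--Ricci analogue of Proposition \ref{fundamental}: for $\lambda \in \R\setminus\{0\}$ there exists $C_\lambda > 0$ such that
\be
\|\mathcal{P}_\lambda F\|_{L^{2,\infty}(S)}\leq C_\lambda \|F\|_{L^2(N)}, \qquad F \in L^2(N).
\ee
This is the main technical point. The proof of Proposition \ref{fundamental} used only (i) the identification $X \cong \bar N \times \R$ arising from the Iwasawa decomposition, (ii) the explicit formula for $e^{\rho H(\bar n)}$ in terms of the norm on $\bar N$, (iii) the homogeneous structure of $\bar N$ under the dilations $\delta_t$, and (iv) the $L^2$ boundedness of the singular integral maximal operator $T_*$ from Theorem \ref{important}. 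All four ingredients have well-known counterparts in the Damek--Ricci setting: $S$ is itself identified with $N \times \R$ via $(n,t) \mapsto n a_t$; the function $e^{\rho H(n)}$ has an analogous quartic expression using the homogeneous norm $|n| = (c^2|V|^4 + 4c|Z|^2)^{1/4}$ on $N$; the dilations $\delta_t(V,Z) = (e^t V, e^{2t} Z)$ act as automorphisms with the same scaling; and the singular kernel $|n|^{-(Q+2i\lambda)}$ on $N$ gives rise to the same $L^2$-bounded maximal operator (the argument from \cite{Stn} cited for Theorem \ref{important} is purely in the setting of homogeneous groups and applies directly to $N$). Therefore the entire pointwise domination
\[
|\mathcal{P}_\lambda F(n a_t)| \leq C e^{-\rho t}\bigl( M_0 \psi(n) + T_* \psi(n)\bigr)
\]
goes through, followed by the same distribution-function computation that converts this into the weak $L^2$ estimate.

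Next, I would record the duality relation, which is a consequence of Fubini together with the definition (\ref{fourierx}) of $\widetilde f$ and the Damek--Ricci Poisson transform: for $f \in C_c^\infty(S)$ and $F \in C^\infty(N)$,
\[
\int_N \widetilde f(\lambda,n) F(n)\, dn = \int_S f(x) \mathcal{P}_\lambda F(x)\, dx.
\]
Once both pieces are in hand, the theorem follows in one line by the duality $(L^{2,1}(S))^* = L^{2,\infty}(S)$: for $F \in L^2(N)$,
\[
\left|\int_N \widetilde f(\lambda,n) F(n)\, dn\right| \leq \|f\|_{L^{2,1}(S)} \|\mathcal{P}_\lambda F\|_{L^{2,\infty}(S)} \leq C_\lambda \|f\|_{L^{2,1}(S)} \|F\|_{L^2(N)},
\]
and taking the supremum over unit $F \in L^2(N)$ yields (\ref{restS}).

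The main obstacle, as indicated, is the Poisson transform estimate. The potential subtlety is whether every step in the proof of Proposition \ref{fundamental} that looks ``symmetric-space-specific'' truly only uses the $NA$ structure. The one place to double-check is the identity $-H(a_{-t} \bar n^{-1} k(\bar m)) = -H(\delta_t(\bar n^{-1}\bar m)) + H(\bar m) + t$ borrowed from \cite[page 518]{Io2}: in the Damek--Ricci setup one simply uses the group law in $S = NA$ directly, since the Poisson kernel is defined intrinsically on $N$ without reference to $K$. Modulo this bookkeeping, the remaining computations, including the kernel comparison (\ref{compare1}) and the dyadic decomposition (\ref{maximal}), are formally identical.
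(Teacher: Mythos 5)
Your proposal is correct and follows exactly the route the paper intends: the paper gives no detailed proof of Theorem \ref{restna}, stating only that it follows ``by using arguments similar to symmetric spaces,'' i.e.\ by establishing the Damek--Ricci analogue of Proposition \ref{fundamental} and then applying the duality between $L^{2,1}(S)$ and $L^{2,\infty}(S)$ exactly as in Theorem \ref{rest}. Your identification of the weak $L^2$ Poisson transform estimate on $S$ as the only genuinely new ingredient, and your checklist of the structural facts it requires, matches the paper's implicit argument.
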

The above theorem extends the following result proved in \cite{KRS}.
 \begin{Theorem}
 Let $f$ be a measurable
function on $S$ and $\alpha\in \R$.
\begin{enumerate}
\item [i)] For $f\in L^{p,1}(S), 1\le p<2$ and $p\le q\le p'$,
\begin{equation*}\left(\int_N|\wtilde{f}(\alpha+i\gamma_{q}\rho,
n)|^q dn\right)^{1/q}\le C_{p,q}\|f\|_{p, 1}, \ C_{1, q}=1.
\end{equation*}

\item [ii)] For $f\in L^{p, \infty}(S), 1<p<2$, $p<q<p'$,
\begin{equation*}\left(\int_N|\wtilde{f}(\alpha+i\gamma_{q}\rho, n)|^q dn\right)^{1/q}\le C_{p,q}\|f\|_{p, \infty}.\end{equation*}
\end{enumerate}
The constants $C_{p,q}>0$ are independent of $\alpha$ and $f$.
Estimates {\em i)} and {\em ii)} are sharp.
\label{restriction-thm-1}
\end{Theorem}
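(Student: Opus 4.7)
The plan is to establish both parts of the restriction theorem by Stein's complex analytic interpolation applied to the family of operators $T_\zeta f = \wtilde{f}(\alpha + i\zeta\rho,\cdot)$, indexed by $\zeta$ in the complex strip $-1 \le \Re\zeta \le 1$, combined with duality between the Helgason--Fourier transform and the Poisson transform on $S$ in the spirit of the proof of Theorem \ref{rest}. The key observation is that the imaginary part of $\lambda$ in the theorem is precisely $\gamma_q\rho$, which corresponds to a well-chosen intermediate $\Re\zeta$ inside the strip.

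The two boundary lines of the strip are handled as follows. On $\Re\zeta = \pm 1$, the imaginary part of $\lambda$ equals $\pm\rho$, so the Fourier kernel satisfies $|e^{(i\lambda + \rho)A(x,n)}| \equiv 1$, and Minkowski's inequality trivially yields $\|T_\zeta f\|_{L^\infty(N)} \le \|f\|_{L^1(S)}$ with operator norm exactly $1$ (which is the source of the normalization $C_{1,q} = 1$ in (i)). On $\Re\zeta = 0$, the parameter $\lambda$ is real, and the Plancherel formula for the Helgason--Fourier transform on Damek--Ricci spaces \cite{ACD} yields $\|T_\zeta f\|_{L^2(N)} \le C\|f\|_{L^2(S)}$. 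Stein interpolation between these two endpoints produces the $L^p \to L^{p'}$ restriction estimate corresponding to $\lambda = \alpha + i\gamma_{p'}\rho$ for $1 \le p \le 2$. The full range $p \le q \le p'$ in part (i) is then swept out by further interpolation between $q = p$ and $q = p'$, and part (ii) is obtained by repeating the argument with weak-$L^p$ on the domain.

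Sharpening the left-hand side to the Lorentz spaces $L^{p,1}$ (for (i)) and $L^{p,\infty}$ (for (ii)) requires the Damek--Ricci analogue of the Poisson transform estimate (\ref{poissonp}): via the pairing $\int_N \wtilde{f}(\lambda,\cdot) F\,dn = \int_S f\,\mathcal{P}_\lambda F\,dx$ and the duality $(L^{p,1})^* = L^{p',\infty}$, the weak-$L^{p',\infty}$ Poisson estimate dualizes to the Lorentz-refined restriction bound. Sharpness (following \cite[page 92]{Io3}) is demonstrated by testing against radial functions $f_s(x) = e^{-\rho r(x)}(1+r(x))^{-s}$: the Cartan decomposition (\ref{cartan}) pins down precisely when $f_s \in L^{p,1}(S)$ or $L^{p,\infty}(S)$, and the Helgason--Fourier transform of $f_s$ is evaluated via the Abel transform, producing a matching lower bound. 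The main obstacle is the Lorentz-space refinement: plain Stein interpolation in $L^p$-spaces is insufficient, so one must establish the Damek--Ricci Poisson estimate in weak $L^{p',\infty}$ separately along the lines of Proposition \ref{fundamental}, and the adaptation of the maximal-function/truncated-singular-integral decomposition used there to the $H$-type nilpotent group $N$ underlying $S$ requires care with the Carnot--Carath\'eodory geometry, though it should go through by standard methods on stratified Lie groups \cite{FS}.
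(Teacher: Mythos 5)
You should first be aware that the paper contains no proof of Theorem \ref{restriction-thm-1}: it is quoted verbatim from \cite{KRS} as the result that Theorem \ref{restna} extends, so the comparison must be with the argument of \cite{KRS,RS}. Judged on its own terms, your outline has a fatal gap at its central step: the endpoint $\|\wtilde f(\alpha,\cdot)\|_{L^2(N)}\leq C\|f\|_{L^2(S)}$ on the line $\Re\zeta=0$ does not follow from the Plancherel theorem and is in fact false. Plancherel controls the integral of $|\wtilde f(\lambda,n)|^2$ against the joint measure $|c(\lambda)|^{-2}d\lambda\, dn$; it says nothing about the restriction of $\wtilde f$ to a single frequency $\lambda$, which is a set of measure zero in the spectral variable. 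The failure of precisely this fixed-$\lambda$ $L^2\to L^2$ bound is the reason restriction theorems are nontrivial and is the point of the present paper: the correct endpoint is $\|\wtilde f(\lambda,\cdot)\|_{L^2(N)}\leq C_\lambda\|f\|_{L^{2,1}(S)}$ for $\lambda\neq 0$ (Theorems \ref{rest} and \ref{restna}), which needs the Lorentz refinement and the weak-type Poisson estimate of Proposition \ref{fundamental}, and which moreover fails at $\lambda=0$. With that endpoint removed, the Stein interpolation scheme has nothing to interpolate against. Two further problems: the kernel does not have modulus $1$ on both boundary lines (at $\lambda=\alpha+i\gamma_1\rho$ its modulus is the full Poisson kernel, and the trivial bound there is $L^1\to L^1$ via total mass $1$, not $L^1\to L^\infty$ via $|{\rm kernel}|\equiv 1$; the uniform statement behind $C_{1,q}=1$ is $\|\mathcal P_{\alpha+i\gamma_q\rho}(x,\cdot)\|_{L^q(N)}\equiv 1$); and ``interpolating between $q=p$ and $q=p'$'' is not meaningful as written, because the operator $f\mapsto\wtilde f(\alpha+i\gamma_q\rho,\cdot)$ itself changes with $q$, so one is not interpolating a single operator between two target spaces.

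The part of your outline that points in the right direction is the duality with the Poisson transform, but in \cite{KRS,RS} this is the engine of the proof rather than a final ``sharpening,'' and the Poisson estimate required is $\|\mathcal P_{\alpha+i\gamma_q\rho}F\|_{L^{p',\infty}(S)}\leq C\|F\|_{L^{q'}(N)}$, an off-diagonal bound not covered by (\ref{poissonp}). The actual argument computes the $L^q(N)$ and $L^{q'}(N)$ norms of the kernel $\mathcal P_{\alpha+i\gamma_q\rho}(x,\cdot)$ explicitly as radial functions of $x$, identifies the Lorentz classes over $S$ to which they belong, and combines Minkowski's integral inequality with H\"older's inequality for Lorentz spaces to obtain part (i); part (ii) then follows by real interpolation in $p$ for fixed $q$. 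No fixed-$\lambda$ $L^2(S)$ bound is invoked anywhere, precisely because none is available below the $L^{2,1}$ endpoint. Your sharpness test functions $f_s(x)=e^{-\rho r(x)}(1+r(x))^{-s}$ are indeed the right ones, but as it stands the positive half of your proof does not go through.
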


\end{document}